\tikzset{join/.code=\tikzset{after node path={%
\ifx\tikzchainprevious\pgfutil@empty\else(\tikzchainprevious)%
edge[every join]#1(\tikzchaincurrent)\fi}}}
\tikzset{>=stealth',every on chain/.append style={join},
         every join/.style={->}}
\tikzset{
    %Define standard arrow tip
    >=stealth',
    %Define style for boxes
    punkt/.style={
           rectangle,
           rounded corners,
           draw=black, very thick,
           text width=6.5em,
           minimum height=2em,
           text centered},
    % Define arrow style
    pil/.style={
           ->,
           thick,
           shorten <=2pt,
           shorten >=2pt,}
}
\newtheorem{thm}{Theorem}[section]
\newtheorem{theorem}{Theorem}[section]
\newtheorem{corollary}[thm]{Corollary}
\theoremstyle{definition}
\newtheorem{example}[thm]{Example}
\newtheorem{definition}[thm]{Definition}
\font\black=cmbx10 \font\sblack=cmbx7 \font\ssblack=cmbx5 \font\blackital=cmmib10  \skewchar\blackital='177
\font\sblackital=cmmib7 \skewchar\sblackital='177 \font\ssblackital=cmmib5 \skewchar\ssblackital='177
\font\sanss=cmss12 \font\ssanss=cmss8 scaled 900 \font\sssanss=cmss8 scaled 600 \font\blackboard=msbm10
\font\sblackboard=msbm7 \font\ssblackboard=msbm5 \font\caligr=eusm10 \font\scaligr=eusm7 \font\sscaligr=eusm5
\font\bsymb=cmsy10 scaled\magstep2
\def\all#1{\setbox0=\hbox{\lower1.5pt\hbox{\bsymb
       \char"38}}\setbox1=\hbox{$_{#1}$} \box0\lower2pt\box1\;}
\def\exi#1{\setbox0=\hbox{\lower1.5pt\hbox{\bsymb \char"39}}
       \setbox1=\hbox{$_{#1}$} \box0\lower2pt\box1\;}
\def\tx#1{{\fam0\relax#1}}
\def\sss#1{{\fam\ssfam\relax#1}}
\def\pmb#1{\setbox0\hbox{${#1}$} \copy0 \kern-\wd0 \kern.2pt \box0}
\def\pmbb#1{\setbox0\hbox{${#1}$} \copy0 \kern-\wd0
      \kern.2pt \copy0 \kern-\wd0 \kern.2pt \box0}
\def\pmbbb#1{\setbox0\hbox{${#1}$} \copy0 \kern-\wd0
      \kern.2pt \copy0 \kern-\wd0 \kern.2pt
    \copy0 \kern-\wd0 \kern.2pt \box0}
\def\pmxb#1{\setbox0\hbox{${#1}$} \copy0 \kern-\wd0
      \kern.2pt \copy0 \kern-\wd0 \kern.2pt
      \copy0 \kern-\wd0 \kern.2pt \copy0 \kern-\wd0 \kern.2pt \box0}
\def\pmxbb#1{\setbox0\hbox{${#1}$} \copy0 \kern-\wd0 \kern.2pt
      \copy0 \kern-\wd0 \kern.2pt
      \copy0 \kern-\wd0 \kern.2pt \copy0 \kern-\wd0 \kern.2pt
      \copy0 \kern-\wd0 \kern.2pt \box0}
\mathchardef\za="710B  %\alpha
\mathchardef\zb="710C  %\beta
\mathchardef\zg="710D  %\gamma
\mathchardef\zd="710E  %\delta
\mathchardef\zve="710F %\epsilon
\mathchardef\zz="7110  %\zeta
\mathchardef\zh="7111  %\eta
\mathchardef\zvy="7112 %\theta
\mathchardef\zi="7113  %\iota
\mathchardef\zk="7114  %\kappa
\mathchardef\zl="7115  %\lambda
\mathchardef\zm="7116  %\mu
\mathchardef\zn="7117  %\nu
\mathchardef\zx="7118  %\xi
\mathchardef\zp="7119  %\pi
\mathchardef\zr="711A  %\rho
\mathchardef\zs="711B  %\sigma
\mathchardef\zt="711C  %\tau
\mathchardef\zu="711D  %\upsilon
\mathchardef\zvf="711E %\phi
\mathchardef\zq="711F  %\chi
\mathchardef\zc="7120  %\psi
\mathchardef\zw="7121  %\omega
\mathchardef\ze="7122  %\varepsilon
\mathchardef\zy="7123  %\vartheta
\mathchardef\zf="7124  %\varomega
\mathchardef\zvr="7125 %\varrho
\mathchardef\zvs="7126 %\varsigma
\mathchardef\zf="7127  %\varphi
\mathchardef\zG="7000  %\Gamma
\mathchardef\zD="7001  %\Delta
\mathchardef\zY="7002  %\Theta
\mathchardef\zL="7003  %\Lambda
\mathchardef\zX="7004  %\Xi
\mathchardef\zP="7005  %\Pi
\mathchardef\zS="7006  %\Sigma
\mathchardef\zU="7007  %\Upsilon
\mathchardef\zF="7008  %\Phi
\mathchardef\zW="700A  %\Omega
\newcommand{\be}{\begin{equation}}
\newcommand{\ee}{\end{equation}}
\newcommand{\bea}{\begin{eqnarray}}
\newcommand{\eea}{\end{eqnarray}}
\newcommand{\beas}{\begin{eqnarray*}}
\newcommand{\eeas}{\end{eqnarray*}}
\def\*{{\textstyle *}}
\newcommand{\ot}{\otimes}
\newcommand{\pa}{\partial}
\newcommand{\ad}{{\rm ad}}
\def\ran{\rangle}
\def\sT{{\sss T}}
\def\sw{{\sss w}}
\def\xd{\tx{d}}
\newcommand{\p}{\partial}
\newcommand{\la}{\langle}
\newcommand{\Z}{\mathbb{Z}}
\newcommand{\R}{\mathbb{R}}
\def\deg{{\sss deg}}
\def\cm{\color{magenta}}
\def\nr{\mathfrak{nr}}
\newcommand{\wu}{\operatorname{deg}}
\begin{document}
\title{\bf Transitive nilpotent Lie algebras of vector fields\\ and their Tanaka prolongations \thanks{Research  founded by the  Polish National Science Centre grant
under the contract number 2016/22/M/ST1/00542.
 }}
\date{}
\author{\\ Katarzyna  Grabowska$^1$\\ Janusz Grabowski$^2$\\ Zohreh Ravanpak$^2$
        \\ \\
         $^1$ {\it Faculty of Physics}\\
                {\it University of Warsaw}\\
                \\$^2$ {\it Institute of Mathematics}\\
                {\it Polish Academy of Sciences}
                }
%\author{Janusz Grabowski}
\maketitle

\begin{abstract}{Transitive local Lie algebras of vector fields can be easily constructed from dilations of $\R^n$ associating with variables positive weights (give me a sequence of $n$ positive integers and I will give you a transitive nilpotent Lie algebra of vector fields on $\R^n$). It is interesting that all transitive nilpotent local Lie algebra of vector fields can be obtained as subalgebras of nilpotent algebras obtained in this way. Starting with a graded nilpotent Lie algebra
one constructs graded parts of its Tanaka prolongations  inductively as `derivations of degree 0,1,etc. Of course, vector fields of weight $k$ with respect to the dilation define automatically derivations of weight $k$, so the Tanaka prolongation is in this case never finite.  Are they all such derivations given by vector fields or there are additional `strange'ones? We answer this question.
 Except for special cases, derivations of degree 0 are given by vector fields of degree 0 and the Tanaka prolongation recovers the whole algebra of polynomial vectors defined by the dilation. However, in some particular cases of dilations we can find `strange' derivations which we describe in detail.}
\end{abstract}
%%%%%%%%%%%%%%%%%%%%%%%%%%%%%%%%%%%%%%%%%%%%%%%%%%%%%%%%%%%%%%%%
\def\cm{\color{magenta}}

\noindent{\bf Keywords}: vector field, nilpotent Lie algebra, dilation, derivation, homogeneous structures
\vspace{2mm} \noindent {\bf MSC 2010}: (primary)  17B30  17B66
 (secondary)  57R25 57S20.

%\thispagestyle{empty}
%\tableofcontents
\section{Introduction}

Origins of the theory of Lie groups go back to 1870s when Sophus Lie studied objects called at that time continuous groups \cite{Lie2,Lie3}.  To be precise, these objects were not groups in modern sense of this term. They were rather locally defined families of diffeomorphisms, closed with respect to the operations of composition and inverse, whenever they were correctly defined. Nowadays such structures are called pseudogroups.

Special examples of pseudogroups studied by Lie were ``finite continuous groups'' i.e. representations of abstract finite dimensional Lie groups in diffeomorphisms on manifolds. The theory of Lie groups is now a standard and classical part of mathematical education, including also the theory of Lie algebras and their representations.

An important aspect of Lie theory is the problem of classification of Lie algebras. It started by the conjecture by Killing and Cartan that any finite-dimensional real Lie algebra $\mathfrak{g}$ is the semidirect product of a solvable ideal and a semisimple subalgebra. The conjecture was later proved by Eugenio Elia Levi in 1905 and is now known as \emph{Levi decomposition}. The first component is the maximal solvable ideal i.e. the \emph{radical} of the algebra. The semisimple subalgebra being the second component is called a \emph{Levi subalgebra}. Since any finite dimensional Lie algebra is a a semidirect product of a solvable Lie algebra and a semisimple Lie algebra we can classify both classes of algebras separately.

Semisimple complex Lie algebras have been completely classified by E. Cartan \cite{Cartan1894}, while the  real case was solved by F. Gantmacher \cite{Gantmacher39}. By definition every semisimple Lie algebra over an algebraically closed field of characteristic 0 is a direct sum of simple Lie algebras. Every finite-dimensional simple Lie algebras belongs to one of the four families: $A_n, B_n, C_n$, and $D_n$, or is isomorphic to one of the exceptional Lie algebras denoted usually by $E_6, E_7, E_8, F_4$, and $G_2$. Simple Lie algebras correspond to connected Dynkin diagrams. Semisimple Lie algebras can be classified by Dynkin diagrams that are not necessarily connected: each connected component of the diagram corresponds to a simple component in the decomposition of a semisimple Lie algebra into simple Lie algebras.

Calssification of solvable components of the Levi decomposition has proven to be much more complicated -- it is now  unsolved and moreover believed to be unsolvable in full generality, i.e. for arbitrarily large dimension. Classification of nilpotent Lie algebras is known up to dimension eight \cite{10,11} while the solvable case terminates in dimension six.

In this paper we discuss realisations of Lie algebras in vector fields. They are of special interest due to a wide range of applications in various aspects of differential geometry, e.g the general theory of differential equations and their systems including classification problems for ODEs and PDEs, integration of differential equations, the theory of systems with superposition principles, geometric control theory, to name just a few. The literature on classification of local transitive realisations of Lie algebras in vector fields is already quite extensive. Theoretical results as well as powerful computational methods can be found e.g in \cite{Blattner69,Draisma12,GS64,SW014}. A classification of Lie algebras of vector fields on the real plane is described in \cite{92}. The solvable Lie algebras of vector fields are discussed thoroughly in survey \cite{SW014} and the references therein. Transitive solvable Lie algebras of vector fields have been also studied in the context of integrability by quadratures \cite{CFGR15,CFG16}. Note that finite-dimensional solvable Lie algebras of vector fields are not far from the nilpotent ones, since they are characterized as those whose derived ideal is nilpotent.

We will be interested in the graded nilpotent Lie algebras of vector fields which have negative weights with respect to a dilation. This is not very restrictive assumption since all transitive nilpotent Lie algebras of vector fields are subalgebras of the algebras of this type \cite{JG90}. Of course, we can consider the infinite dimensional algebra spanned by all homogeneous vector fields of the dilation. Another graded extension of the nilpotent algebra is the Tanaka prolongation \cite{Tan1,Tan2}.
Our conjecture was that both prolongations coincide. However, unexpectedly it turned out that for some particular dilations there are `strange' derivations which not come form vector fields.
In this paper we analyse the situation in detail.

\section{Nilpotent Lie algebras of vector fields}\label{secnil}
Probably the easiest way of constructing a nilpotent Lie algebra of vector fields is to consider vector fields with negative weight with respect to a positive dilation on $\R^n$. To be more precise, let us associate with the coordinates $(x^1,\dots,x^n)$ positive integer degrees (weights) $(r_1, r_2,\ldots, r_n)$. This is equivalent to picking up the \emph{weight vector field}
$$\nabla^h=\sum_ir_ix^i\pa_{x^i}\,,$$
or a one-parametric family of positive \emph{dilations} (called in \cite{Bruce:16, Grabowski:2012}, a \emph{homogeneity structure})
$$h_t(x^1,\dots,x^n)=\left(t^{r_1}x^1,\dots,t^{r_n}x^n\right)\,.$$
The biggest $r_i$ is called the \emph{weight} of the homogeneity structure and denoted $\sw(h)$. Permuting variables, any dilation is determined by its  \emph{signature} $(r_1\leq r_2\leq r_3\leq...\leq r_n)$. We alweys will use the above ordered signature. In this sense $\sw(h)=r_n$. The weight vector field defines \emph{homogeneous functions of weight $k$} as those $f\in C^\infty(\R^n)$ for which $\nabla^h(f)=kf$. One can prove \cite{Grabowski:2006,Grabowski:2012} that only non-negative homogeneity degrees are allowed and that each homogeneous function is polynomial in $(x^1,\dots,x^n)$. Thus we have simultaneously two degrees: the degree of a homogeneous function as the polynomial in variables $x_i$, which we will denote by $\wu$, and the weight with respect to the dilation, denoted $\sw(x_i)$ or just $r_i$. For instance $x_n$ is first degree polynomial with weight $r_n$.  This can be extended to tensors, e.g. a vector field $X$ is homogeneous of degree $k$ (what we denote by $X\in\mathfrak{g}_k(h)$) if
$[\nabla^h,X]=kX$. This time, $k$ may be negative, for instance $\sw(\pa_{x^i})=-r_i$ while $\sw(x^j\pa_{x^i})=r_j-r_i$. For homogenous $X,Y$,
$$\sw([X,Y])=\sw(X)+\sw(Y)\,,$$
that implies trivially that the family of vector fields of negative degrees
$$\mathfrak{g}_{<0}(h)=\bigoplus_{i=-\wu(h)}^{-1}\mathfrak{g}_i(h)$$
is a (finite-dimensional) nilpotent Lie algebra (cf. \cite{JG90,MK88}). This (except for finite-dimensionality) remains valid for general graded supermanifolds \cite{Voronov10}, in particular graded bundles in the sense of \cite{Grabowski:2012}. Of course, for a given dilation the whole graded Lie algebra
$$\mathfrak{g}_{\infty}(h)=\mathfrak{g}_{<0}(h)\oplus\mathfrak{g}_{0}(h)\oplus\mathfrak{g}_{1}(h)\oplus\mathfrak{g}_{2}(h)\oplus...
$$
is defined. Interestingly enough the total graded algebra is determined (with some exceptions) by means of the so called \emph{Tanaka prolongation} \cite{Tan1,Tan2}.

On the other hand, a strong result \cite{JG90} states that any transitive nilpotent Lie algebra $L$ of vector fields defined on a neighbourhood of $0\in\R^n$ is locally a subalgebra of $\mathfrak{g}_{<0}(h)$ for some dilation $h$ on $\R^n$ (cf. also \cite{MK88}, where the assumptions are a little bit stronger). The nilpotent algebra $L$ is therefore automatically finite-dimensional and polynomial in appropriate homogeneous coordinates $(x^1,\dots,x^n)$. A similar result is valid for solvable Lie algebras of vector fields \cite{Grabowska:2019}. The homogeneity structure is uniquely defined by the structure of $L$, so that $\mathfrak{g}_{<0}(h)$ can be viewed as nilpotent `enveloping' of $L$. Note also that every finite-dimensional (real) Lie algebra is a transitive Lie algebra of vector fields as the Lie algebra of left-invariant vector fields on the corresponding 1-connected Lie group.

The homogeneity structures defined in \cite{Grabowski:2012} are a little bit more general: they admit coordinates of degree $0$, so that the manifold is a fibration over the quotient manifold corresponding to zero-degree coordinates (graded bundle). In particular, graded bundles of degree 1 are just vector bundles. The family of vector fields $\mathfrak{g}_{<0}(h)$ is still nilpotent and it is a subalgebra in the family of vector fields with non-positive degrees,
$$\mathfrak{g}_{\le 0}(h)=\bigoplus_{i=-\wu(h)}^{0}\mathfrak{g}_i(h)\,.$$
The latter is generally not solvable, but any finite-dimensional Lie algebra $L$ in $\mathfrak{g}_{\le 0}(h)$ such that $[L,L]\subset\mathfrak{g}_{< 0}(h)$ is solvable \cite{Grabowska:2019}. This is because, for finite-dimensional Lie algebras, $L$ is solvable if and only if $[L,L]$ is nilpotent (this is not true in infinite dimensions). In other words the \emph{nilradical} $\nr(L)$ of $L$ contains $[L,L]$. Solvable Lie algebras of vector fields in $\mathfrak{g}_{\le 0}(h)$ we will call \emph{dilational} (with respect to $h$) (cf. \cite{Grabowska:2019}).

The method of defining dilational transitive solvable Lie algebras of vector fields is illustrated by the following example.
\begin{example}\label{1.1}
On $\R^3$ with coordinates $(x,y,z)$ we define the dilation $h$ declaring that $x$ is of weight 1, $y$ is of weight 2, $z$ is of weight 3. The nilpotent part $\mathfrak{g}_{< 0}(h)$ is a vector space over $\R$ generated by polynomial vector fields
\be \mathfrak{g}_{-3}(h)=\R[\,\pa_z\,],\, \mathfrak{g}_{-2}(h)=\R[\,x\pa_z,\,\pa_y\,]\,\ \
\mathfrak{g}^{-1}(h)=\R[\, \pa_x,\, y\pa_z]\,.
\ee
\end{example}
As polynomial vector fields have an additional grading related to the degree of polynomial (linear, quadratic, etc), so we will use both gradings to read the necessary information about the Lie algebras of transitive vector fields.

 \subsection{Tanaka prolongation of nilpotent algebra associated with a dilation}

The following is well known. Let L be a Lie algebra. We define the lower central series of L inductively:
$$L=L^1\,,\ L^{i+1} = [L,L^i ] \ \text{for}\ i\ge 1\,.$$
\begin{definition} If the lower central series \emph{stops at zero}, i.e. $L^{k+1}=\{ 0\}$ for some $k$, then we say that $L$ is \emph{nilpotent}.
The smallest such $k$ we call the \emph{height} of the nilpotent Lie algebra.
\end{definition}
Let us recall from section \ref{secnil} that on $\R^n$ we consider a positive dilation, i.e. an action $h$ of the multiplicative monoid $(\R,\cdot)$
of the form
$$h_t(x^1,\dots,x^n)=\left(t^{r_1}x^1,\dots,t^{r_n}x^n\right)\,,\ t\in\R\,,$$
where $r_i\in\Z$, $r_i>0$, $\sw(h)=r_n=\max\{r_1\le \ldots\le r_n\}.$ The associated \emph{weight vector field} reads $\nabla^h=\sum_ir_ix^i\pa_{x^i}$.
A smooth function $f$ defined in a neighbourhood of $0\in\R^n$ is homogeneous of weight $a$ if $\nabla^h(f)=af$. Note that only non-negative integer homogeneity degrees are possible, and any such function admits a global polynomial form. Similarly, a vector field $X$ is homogeneous of weight $a$ if
$[\nabla^h,X]=aX$, however now negative weights not less than $-\sw(h)$ are allowed. The family of homogeneous vector fields (of weight $a$) is denoted by $\mathfrak{g}_a(h)$.

We observe that the family of vector fields of negative weights
$$\mathfrak{m}=\mathfrak{g}_{<0}(h)=\bigoplus_{i=-\sw(h)}^{-1}\mathfrak{g}_i(h)$$
is a transitive (finite-dimensional) graded nilpotent Lie algebra of vector fields (cf. \cite{JG90,MK88}). The transitivity means that the vector fields of negative weights span $\sT\R^n$. Of course, any transitive subalgebra of $\mathfrak{g}_{<0}(h)$ is again transitive nilpotent Lie algebra of vector fields.

It can be checked that the product of two homogeneous functions of weight $v_1$ and $v_2$ is homogeneous of weight $v_1+v_2$. Consequently for any multi-index $\alpha=(\alpha_1,...,\alpha_n)\in{\mathbb N}^{n} $, the function
$$x^{\alpha }:=x_1^{\alpha_1}...x_n^{\alpha_n}$$
are homogeneous of weight $r_1\alpha_1+...+r_n\alpha_n$.

Let us fix a homogeneity structure on $\mathbb R^n$, set $w=\sw(h)=r_n$, and let denote the space of vector fields generated by homogeneous vector fields of negative degree by ${\mathfrak g}_{<0}(h)$. The algebra generated by this vector fields is a nilpotent algebra $$\mathfrak m={\mathfrak g}_{-w}(h)\oplus...\oplus{\mathfrak g}_{-1}(h)\,.$$

The $l$th \emph{Tanaka prolongation} is given by
$${\mathfrak g}^{(l)}_T({\mathfrak m})={\mathfrak m}\oplus{\mathfrak{g}_0^T}\oplus{\mathfrak{g}_1^T}\oplus...\oplus{\mathfrak{g}_l^T}\,
$$
where spaces ${\mathfrak{g}_k^T}$  for $g\geq 0$ are defined inductively
$${\mathfrak g}_k^T=\left\{ v\in  \bigoplus_{p<0}{\mathfrak g}_{p+k}^T\ot{\mathfrak g}_p(h)^*\ |\ v[X,Y]=[v(X),Y]+[x,v(Y)]\right\}\,.$$
In the above all spaces $\mathfrak{g}_i^T$ for $i<0$ equal $\mathfrak{g}_i(h)$ by definition while $\mathfrak{g}_i^T$ for $i\geq 0$ may in principle be bigger than $\mathfrak{g}_i(h)$. It is clear that vector field of weight $i$ defines the derivation of weight $i$.

The space ${\mathfrak g}_k^T$ for $k\ge 0$ will be called, with some abuse of terminology, the \emph{space of graded derivations of ${\mathfrak m}$ of weight $k$}.
It is clear that every vector field $X \in \mathbb R^n$ of weight $p$ defines a derivation $D_X$ such that $D_X(Y) = [X, Y]$. So, if $Y$ is a vector field of weight $i<0$ then $D_X(Y)$ is a vector field of weight $p+i$. Therefore, $[{\mathfrak g}_p^T, {\mathfrak g}_{-1}(h)]\subset\mathfrak g_{p-1}^T$. However, we will not assume that ${\mathfrak m}$ is fundamental, i.e. that $[{\mathfrak g}_p(h), {\mathfrak g}_{-1}(h)]={\mathfrak g}_{p-1}(h)$. In particular, if the nonzero weights are only even (the signature consists of even numbers), then ${\mathfrak g}_{-1}(h)=\{ 0\}$.

\medskip 	
Thus we will work with the graded nilpotent Lie algebra  ${\mathfrak m}={\mathfrak g}_{-w}(h)\oplus\dots\oplus{\mathfrak g}_{-1}(h)$, trying to find its Tanaka prolongations. We want to find the condition that the dilation $h$ is such that ${\mathfrak g}_{s}^T={\mathfrak g}_{s}(h)$  for $s\le l$, i.e. the derivations of weight $s$  come from vector fields of weight $s$. Such derivations we will call \emph{of the first kind}.

That this need not be the case in general shows the following.
\begin{theorem}
Let $h$ be a dilation on $\R^n$ such that
\be\label{crucial}\ell=r_n-2r_{n-1}\ge 0\,.
\ee
Let $A=(a_i^j)_{i,j=1}^{n-1}$ be a symmetric matrix of real numbers such that $a_i^j\ne 0$ only for $r_i=r_j=r_{n-1}$.

Then there is a unique derivation of weight $\ell$,
$$D_{\ell}(A)\in  \bigoplus_{p<0}{\mathfrak g}_{p+\ell}(h)\ot{\mathfrak g}^*_p(h)$$
which vanishes on constant vector fields and on linear vector fields reads
\be\label{dell}D_{\ell}^A(x_i\pa_j)=\zd_j^na_i^k\pa_k\,.\ee
As vanishing on constant vector fields and being of weight $\ell\geq 0$, this derivation does not come from a vector field of weight $\ell$.
The derivations $D_{\ell}^A$ we will call \emph{of the second kind}.
\end{theorem}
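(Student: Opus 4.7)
For the construction, for any $Y = Y^s\partial_s \in \mathfrak m$ I would propose the formula
\[
D_\ell^A(Y) \;=\; \sum_{i,j} a_i^j\,(\partial_i Y^n)\,\partial_j,
\]
which on a monomial basis reads $D_\ell^A(x^\alpha \partial_k) = \delta_k^n \sum_{i,j} \alpha_i\,a_i^j\,x^{\alpha-e_i}\partial_j$. This manifestly vanishes on constants and specializes to the prescribed values on linear vector fields. Since $a_i^j \neq 0$ forces $r_i = r_j = r_{n-1}$, every output term has weight $r\cdot\alpha - 2r_{n-1} = (r\cdot\alpha - r_n) + \ell$, so $D_\ell^A$ shifts weight by $\ell$. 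A key preliminary: the hypothesis $\ell \ge 0$ combined with $r_{n-1} \ge 1$ forces $r_n > r_{n-1}$ strictly, so $n$ is the \emph{unique} index of maximal weight; and every $Y \in \mathfrak m$ has $\alpha_n = 0$ in each monomial (since $r\cdot\alpha < r_k \le r_n$), so $Y^n$ never depends on $x_n$.

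The Leibniz rule is verified by direct expansion using $[X,Y]^m = X^l\partial_l Y^m - Y^l\partial_l X^m$. The second-order Hessian contributions cancel by symmetry of mixed partials, leaving the first-order remainder
\[
E^j = a_i^j\bigl(\partial_i X^l\partial_l Y^n - \partial_i Y^l\partial_l X^n\bigr) - a_i^l\partial_i X^n\partial_l Y^j + a_i^l\partial_i Y^n\partial_l X^j.
\]
Two weight observations eliminate this: first, $a_i^j \neq 0$ forces $r_i = r_{n-1}$, and $\partial_i X^l \neq 0$ then demands $r_l > r_{n-1}$, hence $l = n$ by the preliminary, but $\partial_n Y^n = 0$; second, for the $a_i^l$-terms, $r_l = r_{n-1}$ forces $j = n$ (else $\partial_l Y^j = 0$), whereupon the symmetry $a_i^l = a_l^i$ combined with commutativity of multiplication cancels the two summands after $i \leftrightarrow l$ renaming.

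For uniqueness, set $\Delta := D' - D_\ell^A$ for any other candidate $D'$; then $\Delta$ vanishes on constants and on all linear vector fields in $\mathfrak m$. I show $\Delta \equiv 0$ by induction on $|\alpha|$, the base $|\alpha| \le 1$ being immediate. For $|\alpha| \ge 2$, set $W := \Delta(x^\alpha\partial_k)$. Applying Leibniz to $[\partial_m, x^\alpha\partial_k] = \alpha_m x^{\alpha - e_m}\partial_k$ for every $m$ (the right-hand side has strictly smaller polynomial degree when $\alpha_m \ge 1$ and vanishes identically when $\alpha_m = 0$) yields $[\partial_m, W] = 0$ for all $m$, so $W = c^s\partial_s$ is a \emph{constant} vector field. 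The weight identity $r_s = r_k - r\cdot\alpha - \ell$ together with $|\alpha| \ge 2$ and $\ell \ge 0$ yields $r_s < r_n$ for every $s$ with $c^s \neq 0$, hence $x_s\partial_n \in \mathfrak m$. A second Leibniz application to $[x_s\partial_n, x^\alpha\partial_k] = -\delta_s^k x^\alpha\partial_n$ (valid since $\alpha_n = 0$) then yields, for $s \neq k$, that $[x_s\partial_n, W] = -c^s\partial_n = 0$, forcing $c^s = 0$; the case $s = k$ would require $r\cdot\alpha + \ell = 0$, impossible. Thus $W = 0$ and the induction closes.

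Finally, if $D_\ell^A = [Z,\cdot]$ for a vector field $Z$ of weight $\ell$, then $0 = D_\ell^A(\partial_i) = -\partial_i Z$ for every $i$ forces each component $Z^s$ to be a constant function, so $Z$ decomposes as a sum of constant vector fields of negative weights $-r_s$, contradicting $\mathrm{weight}(Z) = \ell \ge 0$ unless $Z = 0$; but then $D_\ell^A \equiv 0$, contradicting $D_\ell^A(x_i\partial_n) = a_i^k\partial_k \neq 0$ for any nonzero $A$. The main obstacle I foresee is the bookkeeping in the Leibniz step, where the interplay of the symmetry of $A$ with the weight-forced vanishing of partial derivatives must be tracked with care; the rest is essentially routine once the explicit formula is in place.
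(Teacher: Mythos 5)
Your proposal is correct, and it reaches the theorem by a genuinely different route than the paper. The paper never writes a closed formula: it extends the prescribed values on constant and linear fields to all of $\mathfrak{m}$ by imposing the auxiliary product rule $D_\ell^A(fg\partial_j)=fD_\ell^A(g\partial_j)+gD_\ell^A(f\partial_j)$, and then proves both the weight statement and the Leibniz rule by induction on the polynomial degree of the vector fields, the inductive step for Leibniz being a fairly long bracket manipulation. Your closed formula $D_\ell^A(Y)=\sum_{i,j}a_i^j(\partial_i Y^n)\partial_j$ subsumes that extension (it visibly satisfies the paper's product rule), makes the weight count a one-line inspection, and collapses the Leibniz verification to a single error term $E^j$ killed by exactly the two mechanisms that also drive the paper's computation: the weight-forced vanishing of $\partial_n Y^n$ and $\partial_l Y^j$ for $j<n$ (resting on your correct preliminary that $\ell\ge 0$ forces $r_n>r_{n-1}$, so $n$ is the unique index of top weight and no component of an element of $\mathfrak{m}$ depends on $x_n$), and the symmetry of $A$. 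Your uniqueness argument is also stronger than what the paper records at this point: the paper only asserts uniqueness of the extension satisfying its auxiliary product rule, whereas you prove uniqueness among all Lie-algebra derivations of weight $\ell$ with the prescribed low-degree values (by the $[\partial_m,\cdot]$ and $[x_s\partial_n,\cdot]$ brackets), which is what the theorem literally claims; the paper effectively supplies the missing argument only later, inside the proof of its Theorem 2.6. The final step (no vector field of weight $\ell\ge 0$ can annihilate all $\partial_i$ unless it is zero) matches the paper's one-sentence remark, with the harmless caveat that one should assume $A\neq 0$.
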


\begin{proof}
It is clear that there is exactly one extension of $D_{\ell}^A$ defined as above on constant and linear vector fields to
$$D_\ell^A\in  \bigoplus_{p<0}{\mathfrak g}_{p+\ell}(h)\ot{\mathfrak g}^*_p(h)$$
such that for polynomials $f,g$ we have another derivation property:
\be\label{dp}D_\ell^A(fg\pa_j)=fD_\ell^A(g\pa_j)+gD_\ell^A(f\pa_j)\,.\ee
We will show that $D_\ell^A$ is a derivation of the Lie algebra ${\mathfrak m}={\mathfrak g}_{< 0}(h)$ of weight $\ell$.

First let us consider the weight of $D_\ell^A$. Note that $D_\ell^A$ is of weight $\ell$ on constant and linear vector fields.

 Indeed, from (\ref{dell})
we read the weight of $D_{\ell}^A(x_i\pa_j)$ is $-r_{n-1}$. Since $D_{\ell}^A(x_i\pa_j)$ is nonzero only for $j=n$ and $r_i=r_{n-1}$ we calculate the weight of $D_\ell^A$ as $-r_{n-1}-(r_{n-1}-r_n)=r_n-2r_{n-1}=\ell$. Then inductively with the use of (\ref{dp}) we prove that $D_\ell^A$ is of weight $\ell$ on higher polynomial degree vector fields. The weight of $x_if\pa_j$ is $r_i+\sw(f)-r_j$. Acting with $D_\ell^A$ we get
$$D_\ell^A(x_if\pa_j)=fD_\ell^A(x_i\pa_j)+x_iD_\ell^A(f\pa_j).$$
The weight of first summand is $\sw(f)+r_i-r_j+\ell$ by the definition of $D_\ell^A$ and the weight of the second summand is also $r_i+\sw(f)-r_j+\ell$ by the inductive assumption, we conclude then that $D_\ell^A$ is indeed of weight $\ell$.

Now we show that $D_\ell^A$ is a derivation, i.e. that for all polynomials $f$ and $g$ the Leibniz rule
\begin{equation}\label{lei}
D_\ell^A([f\pa_j, g\pa_k])=[D_\ell^A(f\pa_j), g\pa_k]+[f\pa_j, D_\ell^A(g\pa_k)]
\end{equation}
is satisfied.  Note that if $\wu(f)=\wu(g)=0$ and $\wu(f)+\wu(g)=1$ the Leibniz rule (\ref{lei}) is satisfied trivially since $D_\ell^A$ vanishes on all vector fields of the form $\pa_i$. The first nontrivial case is then $\wu(f)=1$ and $\wu(g)=1$. First let us assume that $j<n$ and $l<n$ and consider $[x_i\pa_j, x_k\pa_l]$. Since both vector fields should be of negative weight we assume also that $r_i<r_j$ and
$r_k<r_l$. We have
$$D_\ell^A([x_i\pa_j, x_k\pa_l])=\delta_{jk}D_\ell^A(x_i\pa_l)-\delta_{li}D_\ell^A(x_k\pa_j)=0\,,$$
since $D_\ell^A$ vanishes on all linear vector fields $x_i\partial_j$ with $j<n$. For the same reason
$$[D_\ell^A(x_i\pa_j), x_k\pa_l]+[x_i\pa_j, D_\ell^A(x_k\pa_l)]=0,$$
Leibniz rule is then satisfied. Next we consider $[x_i\pa_n, x_j\pa_k]$ for $k<n$:
$$D_\ell^A([x_i\pa_n, x_j\pa_k])=\delta_{ik}D_\ell^A(-x_j\pa_n)=-\delta_{ik}a^s_j\pa_s=0\,,$$
because if $k<n$ then $r_j<r_{n-1}$ and $a^s_j=0$. On the other hand
$$[D_\ell^A(x_i\pa_n),x_j\pa_k]+[x_i\pa_n, D_\ell^A(x_j\pa_k)]=[a^s_i\pa_s,x_j\pa_k]=a^j_i\pa_k=0,$$
and again Leibniz rule is satisfied. Finally we take $[x_i\pa_n, x_j\pa_n]=0$. Since $i<n$ and $j<n$, then of course  $D_\ell^A([x_i\pa_n, x_j\pa_n])=0$ and also
$$[D_\ell^A(x_i\pa_n),x_j\pa_n]+[x_i\pa_n, D_\ell^A(x_j\pa_n)]=
[a^s_i\pa_s,x_j\pa_n]+[x_i\pa_n, a^s_j\pa_s)]=(a^j_i-a^i_j)\pa_n=0\,,$$
since $a^i_j$ is symmetric.

For the inductive step we have to show that
\begin{equation}\label{der}
D_\ell^A([x_if\pa_j,g\pa_k])=[D_\ell^A(x_if\pa_j),g\pa_k]+[x_if\pa_j,D_\ell^A(g\pa_k)]
\end{equation}
for $\deg(f)\le \za$ and  $\deg(g)\le \zb$,  $\za,\zb\ge 1$,
\begin{equation}\label{derx}
D_\ell^A([f\pa_j,g\pa_k])=[D_\ell^A(f\pa_j),g\pa_k]+[f\pa_j,D_\ell^A(g\pa_k)]
\end{equation}
for $\deg(f)\le \za$ and  $\deg(g)\le \zb$,  $\za,\zb\ge 1$.
We start from the left hand side. For the simplicity of notation we shall write $f_k=\pa_kf$
\begin{multline*}
D_\ell^A([x_if\pa_j,g\pa_k])=
D_\ell^A(x_ifg_j\pa_k-\delta_{ik}gf\pa_j-gx_if_k\pa_j)=\\
x_ifD_\ell^A(g_j\pa_k)+x_ig_jD_\ell^A(f\pa_k)+fg_jD_\ell^A(x_i\pa_k)-\delta_{ik}gD_\ell^A(f\pa_j)-\delta_{ik}fD_\ell^A(g\pa_j)- \\ x_igD_\ell^A(f_k\pa_j)-
x_if_kD_\ell^A(g\pa_j)-gf_kD_\ell^A(x_i\pa_j)\,.
\end{multline*}
The sum of all the terms containing $x_i$ equals $x_iD_\ell^A([f\pa_j, g\pa_k])$. For the left hand side of (\ref{der}) we get then
\begin{multline}\label{der1}
D_\ell^A([x_if\pa_j,g\pa_k])=\\
x_iD_\ell^A([f\pa_j, g\pa_k])+fg_jD_\ell^A(x_i\pa_k)-\delta_{ik}gD_\ell^A(f\pa_j)-\delta_{ik}fD_\ell^A(g\pa_j)-gf_kD_\ell^A(x_i\pa_j)\,.
\end{multline}
Now in the right hand side we use first (\ref{dp}) and then perform the necessary calculation
\begin{multline*}
[D_\ell^A(x_if\pa_j),g\pa_k]+[x_if\pa_j,D_\ell^A(g\pa_k)]= \\
[x_iD_\ell^A(f\pa_j)+fD_\ell^A(x_i\pa_j),g\pa_k]+x_i[f\pa_j,D_\ell^A(g\pa_k)]-fD_\ell^A(g\pa_k)(x_i)\pa_j=\\
x_i[D_\ell^A(f\pa_j), g\pa_k]-\delta_{ik}gD_\ell^A(f\partial_j)+f[D_\ell^A(x_i\pa_j), g\pa_k]-gf_kD_\ell^A(x_i\pa_j)+
\\ x_i[f\pa_j, D_\ell^A(g\pa_k)]
-fD_\ell^A(g\pa_k)(x_i)\pa_j\,.
\end{multline*}
Again the terms containing $x_i$ sum up to $x_iD_\ell^A([f\pa_j, g\pa_k])$. Finally for the right hand side we get
\begin{multline}\label{der2}
[D_\ell^A(x_if\pa_j),g\pa_k]+[x_if\pa_j,D_\ell^A(g\pa_k)]= \\
x_iD_\ell^A([f\pa_j, g\pa_k])-\delta_{ik}gD_\ell^A(f\partial_j)+f[D_\ell^A(x_i\pa_j), g\pa_k]-gf_kD_\ell^A(x_i\pa_j)
-fD_\ell^A(g\pa_k)(x_i)\pa_j\,.
\end{multline}
The difference between (\ref{der1}) and (\ref{der2}) reads then
$$f\left([D_\ell^A(x_i\pa_j), g\pa_k]-D_\ell^A(g\pa_k)(x_i)\pa_j-g_jD_\ell^A(x_i\pa_k)+\delta_{ik}D_\ell^A(g\pa_j)\right)\,.$$
The first summand can be replaced with $D_\ell^A([x_i\pa_j, g\pa_k])-[x_i\pa_j,D_\ell^A(g\pa_k)]$ and the last two summands can be replaced with
$-D_\ell^A([x_i\pa_j, g\pa_k])+x_iD_\ell^A(g_j\pa_k)$. To see this we use the explicit calculation of $D_\ell^A([x_i\pa_j, g\pa_k])$.
The difference between left hand side and right hand side of (\ref{der}) reads now
\begin{equation}\label{der3}
f\left(-[x_i\pa_j,D_\ell^A(g\pa_k)]-D_\ell^A(g\pa_k)(x_i)\pa_j+x_iD_\ell^A(g_j\pa_k)\right)\,.
\end{equation}
Note that
$$[x_i\pa_j,D_\ell^A(g\pa_k)]=x_i[\pa_j,D_\ell^A(g\pa_k)]-D_\ell^A(g\pa_k)(x_i)\pa_j$$
and
$$D_\ell^A(g_j\pa_k)=D_\ell^A([\pa_j, g\pa_k])=[\pa_j, D_\ell^A(g\pa_k)]\,.$$
We can therefore transform (\ref{der3}) in the following way
\begin{multline*}
f\left(-[x_i\pa_j,D_\ell^A(g\pa_k)]-D_\ell^A(g\pa_k)(x_i)\pa_j+x_iD_\ell^A(g_j\pa_k)\right)= \\
f\left(-x_i[\pa_j,D_\ell^A(g\pa_k)]+D_\ell^A(g\pa_k)(x_i)\pa_j-D_\ell^A(g\pa_k)(x_i)\pa_j+x_i[\pa_j, D_\ell^A(g\pa_k)]\right)=0\,,
\end{multline*}
which concludes the inductive proof.
\end{proof}

\begin{example}
Take $\R^3$ with coordinates $(x,y,z)$ of weights 1, 2, and 4 respectively. Here ${\mathfrak g}_{<0}(h)$ is spanned by
${\mathfrak g}_{-4}(h)=\la\pa_z\ran$ of weight -4, ${\mathfrak g}_{-3}(h)=\la x\pa_z\ran$ of weight -3, ${\mathfrak g}_{-2}(h)=\la\pa_y,x^2\pa_z, y\pa_z\ran$ ,and ${\mathfrak g}_{-1}(h)=\la\pa_x,x\pa_y,yx\pa_z,x^3\pa_z\ran$ of weight -1. The linear map $D:{\mathfrak g}_{<0}(h)\to{\mathfrak g}_{<0}(h)$ defined by
$$D(y\pa_z)=\pa_y\,\ D(yx\pa_z)=x\pa_y$$ and vanishing on the rest of the basis can be checked to be a derivation. This is derivation of weight 0 which vanishes on coordinate vector fields, so cannot come from a vector field and therefore is of second kind.
\end{example}

\begin{example}
Another example of a dilation $h$ with a derivation of second hand is the one on $\R^3=\{( x_1,x_2,y)\}$
with $x_i$ of degree 1 and $y$ of degree 3.
The derivation of ${\mathfrak g}_{<0}(h)$,
\beas
&D(x_1\pa_y)=\pa_{x_2}\,,\ D(x_2\pa_y)=\pa_{x_1}\\
&D(x_1^2\pa_y)=2x_1\pa_{x_2}\,,\ D(x_2^2\pa_y)=2x_2\pa_{x_1}\,,\ D(x_1x_2\pa_y)=x_1\pa_{x_1}+x_2\pa_{x_2}\,,
\eeas
and the rest of homogeneous basis mapped to 0, is of degree 1 and does not come from a vector field i.e. is of the second kind.
\end{example}

\begin{definition}
We say that $\ell$ is the \emph{first wrong weight} of the dilation $h$ of $\R^n$
if the Tanaka prolongations to weight $\ell-1$ are vector fields:
$${\mathfrak g}^{(k)}_T({\mathfrak m})={\mathfrak m}\oplus{\mathfrak g}_0(h)\oplus{\mathfrak g}_1(h)\oplus...\oplus{\mathfrak g}_k(h)\,,
$$ for $k<\ell$, i.e. any derivation of ${\mathfrak m}$ of weight $k<\ell$ comes from a vector field of weight $k$,
and the Tanaka prolongation ${\mathfrak g}_\ell^T$ is bigger than ${\mathfrak g}_\ell(h)$.
\end{definition}
	
\begin{theorem}\label{vf} Suppose that for the dilation $h$ we have $\ell=r_n-2r_{n-1}$. Then, $\ell$ is the first wrong weight for $h$,
i.e. the space of derivations of weight $\ell$ consists of vector fields of weight $\ell$ extended by derivations of the second kind. In particular, Tanaka prolongation consists of vector fields,
$${\mathfrak g}^{(k)}_T({\mathfrak m})={\mathfrak m}\oplus{\mathfrak g}_0(h)\oplus{\mathfrak g}_1(h)\oplus...\oplus{\mathfrak g}_k(h)\,,
$$
for all $k=0,1,2,\dots,$ if and only if $\ell<0$.
\end{theorem}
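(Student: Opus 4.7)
The plan is to study an arbitrary derivation $D\in\mathfrak{g}_k^T$ of weight $k\ge 0$ by first reducing to a derivation vanishing on constant vector fields, and then analysing what that residue can be. Writing $D(\partial_i)=\sum_j\Omega^j_i\partial_j$, the derivation identity applied to $[\partial_i,\partial_l]=0$ gives the symmetry $\partial_l\Omega^j_i=\partial_i\Omega^j_l$, so on the contractible $\R^n$ each one-form $\sum_i\Omega^j_i\,dx^i$ is exact: $\Omega^j_i=-\partial_i f^j$ for polynomials $f^j$ of weight $k+r_j>0$. Since no non-zero polynomial constant of strictly positive weight exists, the $f^j$ are unique; setting $X=\sum_j f^j\partial_j\in\mathfrak{g}_k(h)$, the difference $\tilde D:=D-D_X$ is a derivation of weight $k$ vanishing on every $\partial_i$.

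Next I would analyse $\tilde D$ on linear vector fields. From $\tilde D([\partial_l,x_i\partial_j])=\tilde D(\delta_{li}\partial_j)=0$ together with $\tilde D(\partial_l)=0$, the derivation identity yields $\partial_l\tilde D(x_i\partial_j)=0$ for every $l$, so $\tilde D(x_i\partial_j)=\sum_m c^m_{ij}\partial_m$ has constant coefficients. Weight matching forces $c^m_{ij}=0$ unless $r_j=k+r_i+r_m$ with $r_i<r_j$ and $r_m$ a weight. Since $r_i\le r_{n-1}$ (because $r_i<r_j\le r_n$) and $r_m\ge r_1\ge 1$, when $\ell\ge 0$ the smallest non-negative $k$ admitting such a solution is $k=r_n-2r_{n-1}=\ell$, achieved uniquely by $r_j=r_n$ and $r_i=r_m=r_{n-1}$. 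Hence for $k<\ell$ all $c^m_{ij}$ vanish on weight grounds alone, while for $k=\ell$ one obtains $c^m_{ij}=\delta^n_j a^m_i$ with $a^m_i$ supported on indices of weight $r_{n-1}$. A short case analysis of the derivation identity applied to $[x_l\partial_p,x_i\partial_j]$ then forces $A=(a^m_i)$ to be symmetric and imposes no further restriction, matching exactly the family $D_\ell^A$ of the previous theorem.

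To propagate this to all of $\mathfrak{m}$, I would induct on $\deg f$: the identity $\partial_i\tilde D(f\partial_j)=\tilde D((\partial_i f)\partial_j)$ recursively determines $\tilde D(f\partial_j)$ up to a constant vector field, and any remaining integration constant is pinned by the explicit construction of $D_\ell^A$ from the previous theorem (and its uniqueness as an extension), together with bracket identities of the form $\tilde D([x_l\partial_p,f\partial_j])$. This yields $\tilde D\equiv 0$ for $k<\ell$ and $\tilde D=D_\ell^A$ for $k=\ell$, so $\ell$ is the first wrong weight and the weight-$\ell$ derivations decompose as the direct sum of $\mathfrak{g}_\ell(h)$ and the space of second-kind derivations $D_\ell^A$. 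The \emph{iff} clause then follows: if $\ell\ge 0$, any non-zero symmetric $A$ gives $D_\ell^A\notin\mathfrak{g}_\ell(h)$, so the Tanaka prolongation is strictly larger than vector fields at level $\ell$; conversely, if $\ell<0$, then for every $k\ge 0$ any ``coincidental'' solution of $r_j=k+r_i+r_m$ (for example when $r_i+r_m$ happens to equal a larger weight) is killed by the same Jacobi-type bracket relations, since the symmetric pattern of a $D_\ell^A$-style derivation is consistent only when $k=r_n-2r_{n-1}$. The main technical obstacle is precisely this case analysis: one must extract from the Jacobi-type identities enough linear constraints to kill every unwanted $c^m_{ij}$ while preserving exactly the $D_\ell^A$ family, and then verify that the induction on polynomial degree introduces no spurious extra parameters.
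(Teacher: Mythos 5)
Your overall strategy (subtract a vector field $D_X$ to reduce to a derivation $\tilde D$ vanishing on the $\partial_i$, then analyse linear and higher-degree vector fields by induction) is the same as the paper's, but there is a genuine gap at the central step. You claim that for $0\le k<\ell$ the constants $c^m_{ij}$ in $\tilde D(x_i\partial_j)=c^m_{ij}\partial_m$ ``vanish on weight grounds alone,'' arguing that the smallest non-negative $k$ solving $r_j=k+r_i+r_m$ is $\ell=r_n-2r_{n-1}$. That minimization is only valid if you force $r_j=r_n$; for $r_j<r_n$ the equation $r_j=k+r_i+r_m$ can have solutions with $k$ as small as $0$ even when $\ell$ is large. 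Concretely, for the signature $(1,2,6)$ one has $\ell=2$, yet at $k=0$ the assignment $\tilde D(x_1\partial_2)=c\,\partial_1$ is perfectly weight-consistent, since $r_2=2=0+1+1$. So weight matching does not close the induction, and the assertion that $\ell$ is the \emph{first} wrong weight --- the heart of the theorem --- is left unproved.

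What is missing is exactly the Lie-algebraic part of the paper's argument: one first shows that $\tilde D(x_a\partial_b)\ne 0$ forces $b=n$, by applying the derivation identity to the vanishing bracket $[x_a\partial_b,x_s\partial_n]$ (where $s$ is an index with $d^s_{ab}\ne0$), which produces a non-cancelling term $d^s_{ab}\partial_n$ when $b\ne n$ (in the example above this bracket immediately gives $c\,\partial_3=0$); then the relation $[x_i\partial_{n-1},x_{n-1}\partial_n]=x_i\partial_n$ together with the symmetry of $A$ kills all entries except those with $r_i=r_m=r_{n-1}$, and only \emph{then} does the weight count give $r_n=2r_{n-1}+k$, contradicting $k<\ell$. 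The same omission undermines your treatment of the converse direction ($\ell<0$), where you merely assert that coincidental weight solutions ``are killed by the same Jacobi-type bracket relations'' --- that is precisely the argument that needs to be written out. The remaining steps (exactness of the one-forms $\Omega^j_i\,dx^i$ and positivity of the weight of the potentials $f^j$, the propagation to higher polynomial degree, and the identification of the residual weight-$\ell$ derivations with the family $D_\ell^A$) do match the paper's proof.
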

\begin{proof}
Suppose that $\ell\geq 0$. We shall prove inductively with respect to the weight of the derivation that ${\mathfrak g}^{(k)}_T({\mathfrak m})$ consists of vector fields only up to $k=\ell-1$. Since the proof of the case of derivation of weight $0$ is essentially the same as the proof of inductive step, we shall proceed with the derivation $D_\zi$ of weight $\zi$ and the assumption that all the derivations of weight less then $\zi$ are given by vector fields. Acting by $D_\zi$ on a vector field of negative weight we obtain a derivation of weight less then $\zi$, i.e. a vector field. We put then $D_\zi(\pa_u)=F_u^v\pa_v$. Since $[\pa_u,\pa_s]=0$ and $D([\pa_u,\pa_s])=[D(\pa_u),\pa_s]+[\pa_u,D(\pa_s)]$, we have
$$[F_u^v\pa_v,\pa_s]+[\pa_u,F_s^w\pa_w]=\left(\frac{\pa F_s^w}{\pa x_u}-\frac{\pa F_u^w}{\pa x_s}\right)\pa_w=0\,.$$
This shows that the 1-forms $\zw^w=F_u^w \xd x^u$ are closed on $\R^n$. Hence, there exist functions $F^w$ such that $F_u^w=\frac{\pa F^w}{\pa x_u}$. If we now put
\be\label{vf}Z=F^w\pa_w\,,\ee
we see that $D_\zi(\pa_u)=[Z,\pa_u]$. We consider then $D'_\zi=D_{\zi}-\ad_Z$ which is a derivation of weight $\zi$ vanishing on coordinate vector fields. Now, it remains to prove that such $D'_\zi$  has to be 0. We shall proceed inductively with respect to the polynomial degree of vector field.

Consider first the linear vector fields $x_a\pa_b$. Since $[\pa_c,x_a\pa_b]=\zd_{ac}\pa_b$, we have
$$0=D'_\zi([\pa_c, x_a\pa_b])=[\pa_c,D'_\zi(x_a\pa_b)]\,.$$
As $D'_\zi(x_a\pa_b)$ is by the inductive assumption a vector field which is annihilated by any coordinate vector field, it is also a coordinate vector field
$$D'_\zi(x_a\pa_b)=d^c_{ab}\pa_c\,.$$
Here, clearly $r_a<r_b$ and $r_c=r_b-r_a-\zi$.

Let us fix $x_a\pa_b$ such that $D'_\zi(x_a\pa_b)\ne 0$, e.g. $d^{s}_{ab}\ne 0$ for some $s$. We will show that $b=n$.  As $r_s=r_b-r_a-\zi<r_b$ and $r_a<r_n$, we have $[x_a\pa_b,x_{s}\pa_n]=0$, and
$$D'_\zi([x_a\pa_b,\,x_{s}\pa_n])=[D'_\zi(x_a\pa_b),\,x_{s}\pa_n]+[x_a\pa_b,\,D'_\zi(x_{s}\pa_n)]=d_{ab}^{s}\pa_n-D'_\zi(x_{s}\pa_n)(x_a)\pa_b$$
which is definitely not zero if $b\neq n$, therefore $D'_\zi(x_a\pa_b)\ne 0$ only for $b=n$. This in particular means also, that there is only one coordinate of highest weight, i.e. $r_{n-1}<r_n$.

Let $i<n$ and put $D'_\zi(x_i\pa_n)=a^k_i\pa_k$. We can think of numbers $a^k_i$ as matrix elements of the real $n\times n$ matrix $A$. As the weight of $D'_\zi(x_i\pa_n)$  is $r_i-r_n+\zi$ which is greater than $-r_n$, we have $a^n_i=0$. It is clear also that $a^n_i=0$ since we act only on vector fields of negative weight. The matrix $A$ then has a form
$$A=\left[\begin{array}{ccc|c}
* & \ldots & * & 0 \\
\vdots &\ddots & \vdots & \vdots\\
* & \ldots & * & 0 \\ \hline
0 & \ldots & 0 & 0
\end{array}\right]$$
Consider the bracket $[x_i\pa_{n},x_j\pa_n]=0$. Applying the derivative we get
\be\label{o}
a^j_i=a^i_j\quad \text{for all} \quad i,j<n\,,
\ee
The matrix $A$ then is symmetric. We will show that $D'_\zi(x_i\pa_n)=0$ for $r_i<r_{n-1}$. For, suppose  $r_i<r_{n-1}$ and consider the bracket $[x_i\pa_{n-1}, x_{n-1}\pa_n]=x_i\pa_n$.
Applying the derivative we get
$$[D'_\zi(x_i\pa_{n-1}),x_{n-1}\pa_n]+[x_i\pa_{n-1},a_{n-1}^k\pa_k]=a_i^j\pa_j\,.$$
But $D'_\zi(x_i\pa_{n-1})=0$ and thus the left hand side equals
$$[x_i\pa_{n-1},a^k_{n-1}\pa_k]=-a^i_{n-1}\pa_{n-1}\,,$$
so what we get is
$$-a^i_{n-1}\pa_{n-1}=a_i^j\pa_j\,.$$
Combining with (\ref{o}) we get $-a_i^{n-1}\pa_{n-1}=a_i^j\pa_j$. This equation means that first $a_i^j=0$ for $j<n-1$ and $i$ such that $r_i<r_{n-1}$ and also that for $j=n-1$ the coefficients $a_i^{n-1}=0$ for $i$ with weight lower than $r_{n-1}$. As a result $D'_\zi(x_i\pa_n)=0$ for $r_i<r_{n-1}$. Moreover by the symmetry condition (\ref{o}) we have $a_i^j=a^i_j$, so $a_{n-1}^i=0$ for $r_i<r_{n-1}$. The matrix $A$ has now the form
$$A=\left[\begin{array}{ccc|c|c}
0 & \ldots & 0 & * & 0 \\
\vdots &\ddots & \vdots & \vdots & \vdots\\
0 & \ldots & 0 & * & 0 \\ \hline
* & \ldots & * & * & 0 \\ \hline
0 & \ldots & 0 & 0 & 0
\end{array}\right]$$
The only nonzero column (and row) represents coefficients corresponding to possibly more than one coordinates with weights equal to $r_{n-1}$. this implies that the only possibility to have $D'_\zi(x_{i}\pa_n)=a^{j}_{i}\p_{j}\neq 0$ is when $r_i=r_j=r_{n-1}$. Calculating the weight of both sides we get
$$r_{n-1}-r_n+\zi=-r_{n-1}\,,$$
and $r_n=2r_{n-1}+\zi$. This contradicts our assumption that $\zi<\ell$. We conclude that the derivative $D'_\zi$ of weight $\zi<\ell$ that vanishes on constant vector fields vanishes also for linear vector fields.

What we will show now is that, for $\zi <\ell$, if derivative $D'_\zi$ of weight $\zi$ of $\mathfrak m$ vanishes on coordinate and linear vector fields, then it vanishes totally. First we prove inductively that if $D'_\zi$ vanishes on vector fields of degree $q$ than it takes constant values for vector fields of degree $q+1$. Indeed, if $g$ is a polynomial of degree $q+1$ then $[g\pa_i,\pa_j]$ is a vector field of degree $q$ then
$$0=D'_\zi([g\pa_i,\pa_j])=[D'_\zi(g\pa_i),\pa_j]\,,$$
so the $D'_\zi(g\pa_i)$ commutes with all coordinate vector filed hence it is constant vector field, by the inductive assumption.

Now take $f\pa_j$ with $f$ being a homogeneous polynomial which is at least quadratic and such that $D'_\zi(f\pa_j)\ne 0$.
We have then $D'_\zi(f\pa_j)=\pa_c$. Consider now $[f\pa_j,x_c\pa_j]$. Note that since $r_c=-r_f+r_j-\zi$ we have $\sw(x_c\pa_j)=r_c-r_j=-r_f-\zi$, so that $x_c\pa_j\in{\mathfrak m}$. We get $[f\pa_j,x_c\pa_j]=0$, since $r_c<r_j$ and $r_j>r_f$. Applying $D'_\zi$, we get
$$0=D'_\zi([f\pa_j,x_c\pa_j])=[D'_\zi(f\pa_j),x_c\pa_j]=[\pa_c,x_c\pa_j]=\pa_j\,,$$
which is a contradiction. Hence, any derivation of weight $\zi$ vanishing on constant vector fields vanishes globally, and the vector field $(\ref{vf})$ is the `inner' derivation we were looking for.

Finally, from the proof is clear that if $\zi=\ell$, then any derivations of degree $\ell$ which is not a vector field is a derivation of the second kind, an that derivations of the second kind of weight $\ell$ exist.
\end{proof}
\begin{corollary}
Let $h$ be a dilation. Derivations of weight 0 of the nilpotent Lie algebra ${\mathfrak g}_{< 0}(h)$ are determined by the adjoint action of vector fields from ${\mathfrak g}_{0}(h)$ if $r_n-2r_{n-1}\ne0$ and consist of ${\mathfrak g}_{0}(h)$ extended by derivations of the second kind if $r_n=2r_{n-1}$.

\end{corollary}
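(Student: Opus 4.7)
The plan is simply to specialize Theorem \ref{vf} to the weight $k=0$ and split into cases according to the sign of $\ell=r_n-2r_{n-1}$.

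First suppose $\ell\ne 0$. If $\ell<0$, Theorem \ref{vf} states that the Tanaka prolongation coincides with vector fields at every nonnegative weight, so in particular ${\mathfrak g}_0^T={\mathfrak g}_0(h)$; thus every derivation of weight $0$ of ${\mathfrak m}={\mathfrak g}_{<0}(h)$ is $\ad_Z$ for a vector field $Z$ of weight $0$. If $\ell>0$, then $0<\ell$ is strictly below the first wrong weight, and the same theorem guarantees ${\mathfrak g}_s^T={\mathfrak g}_s(h)$ for all $s<\ell$; applying this at $s=0$ again yields that every weight-$0$ derivation is the adjoint action of an element of ${\mathfrak g}_0(h)$.

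Now suppose $\ell=0$, i.e.\ $r_n=2r_{n-1}$. Then $0$ is the first wrong weight, and Theorem \ref{vf} tells us that the space of weight-$0$ derivations of ${\mathfrak m}$ consists of the vector fields of weight $0$ extended by derivations of the second kind; moreover, the construction of Theorem 2.2 (applied with $\ell=0$) actually produces such derivations $D_0^A$ for every admissible symmetric matrix $A$, so the extension is nontrivial. Combining the two cases gives exactly the statement of the corollary.

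There is no real obstacle here: the corollary is purely a reading of Theorem \ref{vf} and Theorem 2.2 at the particular weight $k=0$, and the only thing to check is that the dichotomy $\ell\ne 0$ versus $\ell=0$ exhausts the possibilities, which is immediate.
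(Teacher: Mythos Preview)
Your proposal is correct and matches the paper's intent: the corollary is stated without proof immediately after Theorem \ref{vf}, so it is meant to be read off directly from that theorem at weight $0$, exactly as you do. Your case split on the sign of $\ell=r_n-2r_{n-1}$ and the appeal to the definition of ``first wrong weight'' is precisely the intended argument.
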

\section{Concluding remarks}
We found a necessary and sufficient condition for the signature $(r_1,\dots,r_n$ of a dilation $h$ on $\R^n$ assuring that all derivations of weight 0 (more generally, the Tanaka prolongation of the order $k$) of the nilpotent Lie algebra ${\mathfrak g}_{<0}(h)$ of negative vector fields with respect to $h$ come from the Lie algebra ${\mathfrak g}_{0}(h)$ of vector fields of weight 0 (come from ${\mathfrak g}_{\le k}(h)$). Surprisingly enough, there are `strange' derivations for some $h$ which we described in detail.

%%%%%%%%%%%%%%%%%%%%%%%%%%%%%%%%%%%%%%%%%%%%%%%%%%%%%%%%%%%

\small{\vskip1cm

\noindent Katarzyna GRABOWSKA\\
Faculty of Physics\\
                University of Warsaw} \\
               Pasteura 5, 02-093 Warszawa, Poland
                 \\Email: konieczn@fuw.edu.pl \\

\noindent Janusz GRABOWSKI\\ Institute of
Mathematics\\  Polish Academy of Sciences\\ \'Sniadeckich 8, 00-656 Warszawa, Poland
\\Email: jagrab@impan.pl \\

\noindent Zohreh RAVANPAK\\ Institute of
Mathematics\\  Polish Academy of Sciences\\ \'Sniadeckich 8, 00-656 Warszawa, Poland
\\Email: zravanpak@impan.pl \\

\end{document}